\documentclass{amsart}

\addtolength{\textheight}{0.4cm}


\usepackage{amsmath,amssymb,amsthm, amsfonts}
\usepackage[T1]{fontenc}
\usepackage[utf8x]{inputenc}
\usepackage{tikz}



\newcommand{\ie}{\textit{i.e.}\:}
\newcommand{\R}{\mathbb{R}}

\newcommand{\N}{\mathbb{N}}
\newcommand{\calP}{\mathcal{P}}

\newcommand{\Tau}{\mathrm{T}}
\newcommand{\indic}{\mathbf{1}}

\newcommand{\ve}{\varepsilon}
\newcommand{\impl}{\Rightarrow}
\newcommand{\deb}{\rightharpoonup}

\newcommand{\defeq}{\mathrel{\mathop:}=}
\newcommand{\eqdef}{=\mathrel{\mathop:}}
\newcommand{\abs}[1]{\left\lvert#1\right\rvert}
\newcommand{\norm}[1]{\left\lVert#1\right\rVert}
\newcommand{\set}[1]{\left\{#1\right\}}

\newcommand{\pical}{\mathcal{P}}

\DeclareMathOperator*{\Supp}{Supp}
\DeclareMathOperator*{\esssup}{ess\,sup}
\DeclareMathOperator*{\argmin}{arg\,min}

\DeclareMathOperator{\Lip}{Lip}


\numberwithin{equation}{section}

\theoremstyle{plain}
\newtheorem{thm}{Theorem}
\newtheorem{lem}[thm]{Lemma}
\newtheorem{prop}[thm]{Proposition}
\newtheorem{cor}[thm]{Corollary}
\newtheorem*{lem*}{Lemme}

\theoremstyle{definition}
\newtheorem{defn}{Definition}

\theoremstyle{remark}
\newtheorem*{rem}{Remark}


\tikzset { domaine/.style 2 args={domain=#1:#2} }
\tikzset{
xmin/.store in=\xmin, xmin/.default=-3, xmin=-3,
xmax/.store in=\xmax, xmax/.default=3, xmax=3,
ymin/.store in=\ymin, ymin/.default=-3, ymin=-3,
ymax/.store in=\ymax, ymax/.default=3, ymax=3,
}

\newcommand {\axes} {
\draw[->] (\xmin,0) -- (\xmax,0);
\draw[->] (0,\ymin) -- (0,\ymax);
}
\newcommand {\fenetre}
{\clip (\xmin,\ymin) rectangle (\xmax,\ymax);}

\begin{document}

\title[Optimal transport with oscillation costs]{Optimal transportation with an oscillation-type cost~: the one-dimensional case}
\author{Didier Lesesvre, Paul Pegon, Filippo Santambrogio}
\date\today

\begin{abstract}
The main result of this paper is the existence of an optimal transport map $T$ between two given measures $\mu$ and $\nu$, for a cost which considers the maximal oscillation of $T$ at scale $\delta$, given by $\omega_\delta(T):=\sup_{|x-y|<\delta}|T(x)-T(y)|$. The minimization of this criterion finds applications in the field of privacy-respectful data transmission. The existence proof unfortunately only works in dimension one and is based on some monotonicity considerations.  
\end{abstract}

\maketitle

\subjclass{{\bf MSC 2010} Primary: 49J45, Secondary: 49J05, 46N10}

\keywords{{\bf Keywords : }Monge-Kantorovich, Optimal Transportation, modulus of continuity, monotone transports, privacy respect}

\tableofcontents

\section{Introduction and motivations}

Optimal transport problems represent the mathematization of a very natural applied question, which is the following: given the initial density of a certain amount of mass, and the target density that we want to realize, which is the best possible way to displace the mass so as to guarantee a minimal cost? 

Based on an idea dating back to G. Monge (see \cite{Monge}), this is formalized through a map $T:X\to Y$ with the property $T_\#\mu=\nu$, where $\mu$ and $\nu$, probabilities on $X$ and $Y$, respectively, are the two given distribution of mass, represented by two measures (the case of densities is retrieved when the measures are absolutely continuous). The constraint  $T_\#\mu=\nu$, expressed in terms of the image measure (we recall that  $T_\#\mu$ is a measure defined through $T_\#\mu(A)=\mu(T^{-1}(A))$), stands for the fact that $T$ must ``rearrange'' the distribution $\mu$ into the new fixed one, $\nu$.

The typical criterion is based on the minimization of the average displacement
$$\min \int |T(x)-x|^p\,d\mu(x)$$
(we stick here to the euclidean case, where $X$ and $Y$ are subsets of $\R^d$ and $T(x)-x$ makes perfect sense, even if much has been said about other cases, in metric spaces, for instance). More generally, the criteria that have been studied are of the form $\int c(x,T(x))\,d\mu(x)$ for various cost-functions $c$, adapted to the different applications. For this wide class of problems, which is now known to be linked to many other branches of mathematics, from fluid mechanics, to mathematical economy, differential geometry, functional inequalities and probability, an alternative, convex, formulation is available thanks to the ideas of L. Kantorovich (see \cite{Kantorovich}). The existence of an optimal $T$ also passes first through this extended formulation.

These classical problems in optimal transport theory are now a very lively domain in pure and applied mathematics and most of them have already been solved or understood. Yet, it appears from some branches of applications, that a new generation of transport problems should be investigated, namely the minimization, over the same class of transport maps $T$, of more general costs, also depending on the differentiability or continuity properties of $T$. Let us think to optimization problems of the form 
$$\min \int |T(x)-x|^p\,d\mu(x)+\int |\nabla T(x)|^2dx,$$
where the goal is to find a good transport map, where its regularity also comes into play. Many variants of the form $\int L(x,T(x),\nabla T(x))dx$ could be considered, and they are already used in some applications, for instance in image processing (see \cite{PeyRab}) or shape analysis. Numerical studies on these issues have been performed, and not only in the last few years (see \cite{AngHakTanKik}, which anticipates a lot the current interest for this kind of problems in applied mathematics).  

The difficulties in studying this higher-order problem are somehow different than in the usual Kantorovich theory: here the existence of an optimal $T$ is typically easier to establish than in the usual theory, but much less is known about the characterization of the optimal maps. For instance, once we suppose that at least one map $T$ providing a finite value to the energy exists, it is not difficult to prove by standard compactness arguments in Sobolev spaces that a minimizer exists. Yet, finding it or studying its characterization appears to be more difficult, and \cite{LouSan} gives for instance a very partial answer, in the one dimensional and uniform case where $X\subset \R$ and $\mu$ is the Lebesgue measure over $X$.

Other criteria to minimize can easily appear to be meaningful, based for instance on the continuity of $T$ instead of its derivatives. If minimizing the Lipschitz constant $\Lip(T)$ is easy to understand (and a straightforward application of Ascoli-Arzel\`a Theorem provides existence of an optimal $T$), we want here to present a more tricky functional. For a fixed positive value $\delta$ one can consider the following quantity
 $$\omega_\delta (T)= \sup_{\abs{x-x'} < \delta} \abs{T(x) - T(x')};$$
which is merely the modulus of continuity of $T$ evaluated at $\delta$. Minimizing $\omega_\delta(T)$ means finding a transport map $T$ which is as continuous as possible, {\it at scale} $\delta$.

It is interesting to see that this very minimization problem 
$$\min\quad \{ \omega_\delta(T)\ | \ T_\#\mu=\nu \}$$
comes from a precise applied question which has been raised by the computer science community, in the framework of privacy-preserving protocols in telecommunications (see \cite{Mascetti2}).

Let us try to explain why this precise minimization should play a role in this setting, even if the reader could easily imagine other possible applications of this optimization problem.
Suppose that a high number of users are connected through their mobile phone to a service whose goal is to tell them whether their friends (from a social network list, for instance) are located or not within a certain fixed distance from them. To do so, all mobile phones periodically communicate their position to a common server which computes their distances and provides the users with the desired information. Yet, for privacy reasons, we do not want the server to know the position of each user, even if we want it to be able to compute their distances, which could seem difficult to realize. 
What is currently done to overcome this difficulty is that a third object, an external server, randomly chooses an isometry of the space (\ie, a rotation $R$ of the earth surface) and communicates it to the mobile phones of the users, but not to the server. The users then communicate their rotated position $R(x)$ instead of their position $x$ to the server, which is therefore able to compute their mutual distances, without exactly knowing their positions.

However, this is not satisfactory yet, since if the number of users is large enough, the distance-calculator server could see which are the densest regions\footnote{Notice that it is not always necessary that an external server chooses a same random isometry for all the users, since an alternative way of proceeding is that each pair of users (actually their mobile devices) secretly  agree on a randomly chosen isometry of the space that is not known to the server. In this case the server only knows mutual distances between users instead of their ``rotated'' positions, but this will be enough for him to make some clusters and compare to the distribution of the users in the world, which is well-known, see \cite{population grid}, and the final result would be the same.}
where most of the users are concentrated. It is not difficult to guess that these regions are more likely to correspond to Manhattan, Paris, Tokyo, and other strongly urbanized points of the Earth, and once one can reconstruct the position of these poles he can also reconstruct all the positions. This is a matter of non-uniform density, and this problem would not exist if the population on Earth was uniformly spread. Hence, an alternative idea could be the following: find a map $T$ (instead of $R$) transforming the given population density $\mu$ into a uniform density $\nu$; it is clear that $T$ cannot be an isometry, but one can look for the map $T$ which gives a minimal distance distortion. We cite for instance the work \cite{GraMad} where the distortion is minimized in terms - roughly speaking -  of the local bi-Lipschitz constant.

But the true model which is of interest for the privacy community is really the minimization of the $\omega_\delta$ modulus of continuity. Actually, if $\delta$ stands for the threshold distance the users are interested in, and we set $L=\omega_\delta(T)$, we get $|T(x)-T(y)|>L\impl |x-y|\geq \delta$. This means that all distances that are computed by the server to be larger than $L$ correspond for sure to original distances larger than $\delta$. The ambiguity stays true only in case the measured distance is smaller than $L$, but in such a case it is still possible to compute in a privacy-respectful way the distances, but with a more costly procedure (see \cite{Mascetti}, where cryptographic techniques are used to this aim). This finally means that it would be suitable to minimize $L$.

It is also clear that, from the application point of view, it is not really necessary to find the optimal map $T$, but any map with a small value for $\omega_\delta(T)$ would be fine. But exactly for this purposes it is worthwhile to study the minimization problem from a theoretical point of view, so as to find out possible general features of optimal maps (provided they exist) which could suggest how to produce ``good'' maps $T$.

This is why the present paper investigates, as a very first step in this research direction, the existence of an optimal map. This problem is quite hard since imposing continuity only at scale $\delta$ is not enough to give compactness for the minimizing sequences. 

On the positive side, as in the most classical traditions in transport problems, there exists a Kantorovich version of this problem, which is presented in this paper and existence of a minimizer for this extended problem is then proven. But the usual strategy consisting in proving that the Kantorovich minimizer actually derives from a map $T$ is not easy to implement.

A solution to this problem is proposed in dimension one, where it is possible to add an extra property: it is proven that an optimal map exists, and that it is piecewise monotone, where the number of monotonicity changes is at most of the order of $1/\delta$. 

In order to arrive to this result, the paper is organised as follows: Section 2 presents the key features of the problem we want to solve (the minimization among transport maps $T$) and of its Kantorovich relaxation, including the existence of a minimizer for this relaxed problem; Section 3 slowly gets to the existence of an optimal transport map starting from the optimal transport plan and applying suitable constructions; Section 4 gives an easy but interesting example where the optimal map is not monotone, but only piecewise monotone; finally, we describe in Section 5 how to handle some mathematical extensions of the problem that we preferred not to introduce from the beginning so as to make the paper more readable and to concentrate on the main ideas without too many technicalities.

\section{The original problem and its Kantorovich formulation}

Let $\delta$ be a positive real number and $\mu,\nu$ two probability measures on $\R^d$. Let us denote $\Omega$ and $\Omega'$ the supports $\Supp(\mu)$ and  $\Supp(\nu)$ of the two measures, and suppose for simplicity that they are compact (see Section 5 for the adaptations to a non-compact setting). All the functions that we consider will be defined on $\Omega$ (this will also be discussed in Section 5). We are interested in minimizing the functional
\begin{equation*}\tag{$M_\delta$}
\omega_\delta : T \longmapsto \sup_{\begin{array}{c}\abs{x-x'} < \delta\\ x,x'\in\Supp(\mu)\end{array}} \abs{T(x) - T(x')}
\end{equation*}
where $T$ lies in $\Tau(\mu,\nu) \defeq \{T \in \mathcal{B}(\Omega;\Omega') : T_\# \mu = \nu \}$, the set of \textbf{transport maps} from $\mu$ to $\nu$. The quantity $\omega_{\delta}$ is merely the modulus of continuity of $T$ evaluated at $\delta$.

\subsection{Kantorovich formulation}

Inspired from the Kantorovich reformulation of the initial Monge problem, we can consider the minimization on a wider class of objects : probability measures on the product space, or \textbf{transport plans}, instead of transport maps.  This should guarantee easier existence results and leads us to study the relaxed functional 
\begin{equation*}\tag{$K_\delta$}
\omega^K_{\delta} : \gamma \longmapsto (\gamma\otimes\gamma)-\esssup \left\{\abs{y-y'} : (x,y), (x',y') \in \Omega\times\Omega', \abs{x-x'}<\delta\right\}
\end{equation*}
where $\gamma$ lies in $\Pi(\mu,\nu) \defeq \left\{ \gamma \in \mathcal{P}(\Omega \times\Omega') : (\pi_1)_\# \gamma = \mu, \ (\pi_2)_\# \gamma = \nu \right\}$. This map can be expressed as 
$$\omega_{\delta}(\gamma) = \norm{f}_{L^{\infty}(\gamma \otimes \gamma)},$$
where
$$ f : (a,b) \in (\Omega\times\Omega') \times (\Omega\times\Omega') \longmapsto |\pi_2(b)-\pi_2(a)|\indic_{B(0,\delta)}(\pi_1(b) - \pi_1(a)).$$
Since the above function $f$ is lower semicontinuous, looking at its supremum on any set gives the same result as the supremum on the closure of the same set; in particular, its essential supremum coincides with the supremum on the support of the measure, hence we have
$$\omega^K_{\delta}(\gamma) = \sup \left\{\abs{y-y'} : (x,y), (x',y') \in \Supp(\gamma), \abs{x-x'}<\delta\right\}.$$

In the following, all the topological notions on $\mathcal{P}(\Omega)$, where $\Omega$ is compact, will relate to the weak-$\star$ topology with the identification to the dual of functions vanishing at infinity, \ie $\mathcal{C}_0(\Omega)'$ which equals $\mathcal{C}(\Omega)'$ for $\Omega$ is compact. 


The entire paper is devoted to the existence proof of an optimal transport map when $\mu$ has no atom and $\Omega\subset\R$. Further effort should still be made to handle the multi-dimensional case.

\subsection{Existence of an optimal transport plan}

\begin{thm}\label{thm:existplan}
Given two probabilities $\mu,\nu$ on $\Omega$ and $\Omega'$ respectively, and $\delta > 0$, there exists a transport plan  $\gamma \in \Pi(\mu,\nu)$ minimizing the cost $\omega^K_\delta$.
\end{thm}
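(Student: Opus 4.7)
The plan is to apply the direct method of the calculus of variations: combine the standard weak-$\star$ compactness of $\Pi(\mu,\nu)$ with a lower semicontinuity property of $\omega^K_\delta$, and conclude by Weierstrass.

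First I would recall that $\Pi(\mu,\nu)$ is weakly-$\star$ compact in $\mathcal{P}(\Omega\times\Omega')$. Since $\Omega\times\Omega'$ is compact, the space $\mathcal{P}(\Omega\times\Omega')$ is weakly-$\star$ compact, and $\Pi(\mu,\nu)$ is the intersection of this compact set with the weakly-$\star$ closed affine subspaces defined by the two marginal conditions $(\pi_1)_\#\gamma=\mu$ and $(\pi_2)_\#\gamma=\nu$ (these are closed since $\pi_1,\pi_2$ are continuous, so the maps $\gamma\mapsto(\pi_i)_\#\gamma$ are weakly-$\star$ continuous). Hence $\Pi(\mu,\nu)$ is weakly-$\star$ compact, and a minimizing sequence $(\gamma_n)$ admits a subsequence converging to some $\gamma\in\Pi(\mu,\nu)$.

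The main step is to prove that $\omega^K_\delta$ is lower semicontinuous for the weak-$\star$ topology. Here I would crucially use the representation established above, namely
$$\omega^K_\delta(\gamma) = \sup\{|y-y'| : (x,y),(x',y')\in\Supp(\gamma),\ |x-x'|<\delta\},$$
together with the Kuratowski-type property of supports: if $\gamma_n\destar\gamma$ and $(x,y)\in\Supp(\gamma)$, there exists a sequence $(x_n,y_n)\in\Supp(\gamma_n)$ with $(x_n,y_n)\to(x,y)$ (indeed, every open neighborhood $U$ of $(x,y)$ satisfies $\gamma(U)>0$, hence $\liminf \gamma_n(U)>0$ by the Portmanteau theorem, so $\Supp(\gamma_n)\cap U\neq\emptyset$ for large $n$). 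Then for arbitrary $(x,y),(x',y')\in\Supp(\gamma)$ with $|x-x'|<\delta$, approximate them by such sequences $(x_n,y_n),(x'_n,y'_n)\in\Supp(\gamma_n)$; since the inequality $|x-x'|<\delta$ is \emph{strict}, $|x_n-x'_n|<\delta$ holds for $n$ large enough, so $|y_n-y'_n|\leq\omega^K_\delta(\gamma_n)$, and passing to the limit gives $|y-y'|\leq\liminf_n\omega^K_\delta(\gamma_n)$. Taking the supremum over admissible pairs yields
$$\omega^K_\delta(\gamma)\leq\liminf_n\omega^K_\delta(\gamma_n),$$
which is the desired lower semicontinuity.

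The main obstacle I anticipate is precisely handling the $\esssup$/support subtlety: weak-$\star$ convergence gives no control on supports in the ``upper'' direction (points of $\Supp(\gamma_n)$ may stay away from $\Supp(\gamma)$), but for our inequality we only need the ``lower'' direction (recovering points of $\Supp(\gamma)$ as limits from $\Supp(\gamma_n)$), which is exactly what Portmanteau yields. The use of the strict inequality $|x-x'|<\delta$ in the definition of $\omega^K_\delta$ is essential to transfer the constraint through the limit; this is why the authors' preliminary reformulation of $\omega^K_\delta$ as a supremum on the support (using lower semicontinuity of $f$) is a key ingredient that I would invoke without reproving.
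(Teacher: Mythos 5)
Your proposal is correct and follows essentially the same strategy as the paper: the direct method (weak-$\star$ compactness of $\Pi(\mu,\nu)$ plus lower semicontinuity of $\omega^K_\delta$ via its reformulation as a supremum over the support, exploiting the strictness of $|x-x'|<\delta$ to transfer the constraint to approximating points). The only difference is cosmetic: the paper extracts a Hausdorff-convergent subsequence of the supports $\Supp(\gamma_n)\to\Gamma\supset\Supp(\gamma)$ to produce the approximating sequences, whereas you obtain them directly from the Portmanteau inequality $\liminf_n\gamma_n(U)\geq\gamma(U)$ for open $U$ — an equally valid (and, incidentally, more readily adaptable to the non-compact setting of Section 5) way to get the same key property.
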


\begin{proof}

Let us take a minimizing sequence $\gamma_n \in \Pi(\mu,\nu)$ and set $\Gamma_n=\Supp(\gamma_n)$. Up to subsequences, we can suppose weak-$\star$ convergence $\gamma_n\deb\gamma$ and Hausdorff convergence $\Gamma_n\to \Gamma$. It is clear that we have $\Supp(\gamma)\subset \Gamma$ and hence
$$\omega^K_\delta(\gamma)\leq \sup \left\{\abs{y-y'} : (x,y), (x',y') \in \Gamma, \abs{x-x'}<\delta\right\}.$$
Now, take two arbitrary points $(x,y),(x',y')\in\Gamma$ with $|x-x'|<\delta$. By Hausdorff convergence, it is possible to build two sequences $(x_n,y_n),(x'_n,y'_n)\in\Gamma_n$ with $(x_n,y_n)\to (x,y)$ and $(x'_n,y'_n)\to (x',y')$. In particular, for $n$ large enough, they satisfy $|x_n-x'_n|<\delta$ and hence $|y_n-y'_n|\leq \omega^K_\delta(\gamma_n)$, which yields $|y-y'|\leq \liminf_n \omega^K_\delta(\gamma_n)$. Passing to the supremum over the points of $\Gamma$ we get $\omega^K_\delta(\gamma)\leq \liminf_n \omega^K_\delta(\gamma_n)$. This semicontinuity proves that $\gamma$ is optimal.
\end{proof}

\section{Existence of an optimal transport map}

In the following, we denote by $K$ the minimal value of $ \omega^K_\delta$ on $\Pi(\mu,\nu)$, by $\gamma^\star$ an arbitrary minimizer for this relaxed problem and by $\Gamma$ its support : 
$$K \defeq \min_{\Pi(\mu,\nu)} \omega^K_\delta,\quad
\gamma^\star \in \argmin_{\Pi(\mu,\nu)} \omega^K_\delta,\quad
\Gamma \defeq \Supp\gamma^\star.\\
$$
Notice that the projection $\pi_1(\Gamma)$ of the support on the first factor of $\Omega\times\Omega'$ equals $\Supp(\mu)$, \ie $\Omega$.

We shall now state the main theorem of this paper:

\begin{thm} \label{thm:existmap}
Let $\Omega,\Omega'$ be compact set in $\R$, $\mu \in \calP(\Omega)$ and $\nu \in \calP(\Omega')$ two given probabilites on them. If $\mu$ has no atom, there exists an optimal transport map $T \in \Tau(\mu,\nu)$ for the cost $\omega_{\delta}$, where
\begin{equation*}
\omega_\delta : T \longmapsto \sup_{\begin{array}{c}\abs{x-x'}<\delta\\x,x'\in\Omega\end{array}} \abs{T(x)-T(x')}.
\end{equation*}
\end{thm}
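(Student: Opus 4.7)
The plan is to promote the optimal transport plan $\gamma^\star$ (provided by Theorem~\ref{thm:existplan}, with value $K$ and support $\Gamma$) to an optimal transport map $T$, exploiting one-dimensional monotone rearrangement. The starting point is the following geometric property of $\Gamma$, a direct consequence of the characterization $\omega^K_\delta(\gamma^\star) = \sup\set{|y-y'| : (x,y),(x',y')\in\Gamma,\ |x-x'|<\delta}$: for any subset $E\subseteq\Omega$ with $\mathrm{diam}(E)<\delta$, the projected set $\pi_2(\Gamma\cap(E\times\Omega'))$ has diameter at most $K$.

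The construction I would attempt is to partition $\Omega=\Supp(\mu)$ into a finite family of intervals $I_1,\ldots,I_N$ of length at most $\delta$, hence with $N$ of order $1/\delta$ as advertised in the introduction. On each $I_j$, set $\nu_j \defeq (\pi_2)_\#(\gamma^\star_{|I_j\times\Omega'})$: a sub-probability measure supported in a set of diameter at most $K$, and satisfying $\sum_j \nu_j=\nu$. Since $\mu$ has no atom, $\mu_{|I_j}$ admits a unique increasing rearrangement $T_j^+$ (and a unique decreasing one $T_j^-$) transporting it to $\nu_j$. Gluing these pieces yields a candidate map $T\in\Tau(\mu,\nu)$, and inside each $I_j$ the oscillation $|T(x)-T(x')|$ is automatically bounded by $\mathrm{diam}(\Supp(\nu_j))\leq K$.

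The principal obstacle is the cross-boundary control: when $x\in I_j$ and $x'\in I_{j+1}$ satisfy $|x-x'|<\delta$, one still needs $|T(x)-T(x')|\leq K$. This suggests alternating the direction of the monotone rearrangement (increasing on odd-indexed intervals, decreasing on even ones), so that $T(a_{j+1}^-)$ and $T(a_{j+1}^+)$ both approach the same extreme, say the maximum, of $\Supp(\nu_j)$ and $\Supp(\nu_{j+1})$ respectively, producing the promised piecewise monotone structure. However, a uniform partition is not automatically compatible with this trick: the two extrema could be realized by points of $\Gamma$ lying at the far ends of $I_j$ and $I_{j+1}$, which may be more than $\delta$ apart, so the $K$-bound does not follow gratuitously. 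I expect the genuine argument requires choosing the cut points $a_j$ adaptively as \emph{matching points} of $\Gamma$, \ie points with a common value $y_j\in\Omega'$ such that $(a_j,y_j)\in\Gamma$ serves as a common extremum of the two adjacent supports, so that $T$ is actually continuous across $a_j$. Establishing the existence of sufficiently many such matching points (while keeping $N=O(1/\delta)$) and verifying that the resulting piecewise monotone $T$ does satisfy $\omega_\delta(T)\leq K$ is where I expect the main difficulty to lie.
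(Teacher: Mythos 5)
Your reduction to one-dimensional monotone rearrangement on finitely many intervals is indeed the right skeleton, and your diagnosis is accurate: the whole difficulty is the cross-boundary control, and your proposal stops exactly where the paper's proof begins to do real work. The gap is genuine. First, a partition into intervals of length $<\delta$ chosen a priori gives you no handle on $|T(x)-T(x')|$ for $x\in I_j$, $x'\in I_{j+1}$ with $|x-x'|<\delta$: the set $\{x,x'\}$ has diameter possibly close to $2\delta$, so the basic property of $\Gamma$ yields nothing. Second, the ``matching point'' repair requires, for every consecutive pair, that $\sup\Supp(\nu_j)=\sup\Supp(\nu_{j+1})$ (or the analogous statement for infima, alternating along $j$) and that this common extremum be realized by a point of $\Gamma$ sitting at the cut; these are global combinatorial constraints on the partition for which you offer no existence mechanism, and they simply fail in general (e.g.\ when $\gamma^\star$ is the graph of an increasing map the fibers $\Supp(\nu_j)$ are essentially disjoint and no matching exists --- the correct orientation pattern there is ``all increasing'', which your alternating scheme would not produce). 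Note also that mere continuity of $T$ at the cut points is not the right target: what saves the alternating construction, when the matching of extrema does hold, is that both $\Supp(\nu_j)$ and $\Supp(\nu_{j+1})$ lie in a common interval of length $K$, a much stronger statement.

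The paper resolves this by a different mechanism that removes the boundary issue altogether. It encloses $\Gamma$ in a strip $[f,g]$ ($f,g$ being the lower and upper envelopes of $\Gamma$) satisfying the \emph{global} property that $|x-x'|<\delta$ and $(x,y),(x',y')\in[f,g]$ imply $|y-y'|\leq K$; it then regularizes the strip via the transforms $\phi^{\uparrow}(x)=\inf_{|y-x|<\delta}\phi(y)$ and $\phi^{\downarrow}(x)=\sup_{|y-x|<\delta}\phi(y)$, replacing $[f,g]$ by the still-admissible strip $[f^{\uparrow\downarrow},f^{\uparrow}+K]$. A structure theorem for such conjugate pairs (proved by approximation with step functions, where ``floors'' are shown to be $2\delta$-separated, whence the $O(1/\delta)$ count) yields a finite subdivision of $\Omega$ into intervals on which both boundary functions are monotone in the same direction --- so the partition and the orientations are dictated by the optimal plan itself, not chosen in advance. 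On each such interval the monotone rearrangement of the corresponding marginals has its graph $\mu$-a.e.\ inside the strip (Lemma \ref{lem:monotransp}), and the bound $\omega_\delta(T)\leq K$, including across all boundaries, then follows from the strip's optimality property with no matching argument needed. If you want to complete your proof, you essentially need to reconstruct this strip/conjugate-pair machinery (or an equivalent substitute); the uniform $\delta$-partition cannot be made to work as stated.
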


\subsection{Preliminary remarks and definitions}

We denote by $f$ and $g$ the two functions defined on $\pi_1(\Gamma)=\Omega$ delimiting the convex hull along $y$ of $\Gamma$ \ie
\begin{align*}
 f(x) & \defeq \inf \set{y : (x,y) \in \Gamma},\\ 
 g(x) & \defeq \sup \set{y : (x,y) \in \Gamma}.
\end{align*}

\begin{rem}
These functions are Borel measurable and in particular $f$ is lower semicontinuous and $g$ is upper semicontinuous. Actually, since $\Gamma$ is compact, for every $x$ there is $y$ such that $(x,y)\in\Gamma$ and $f(x)=y$. If one takes $x_n\to x$ and $y_n=f(x_n)$ then there is a subsequence $y_{n_k}\to y=\liminf_n y_n$.  Since $\Gamma$ is closed, we get $(x,y)\in\Gamma$. Then one has $f(x)\leq y$ and semicontinuity is proven. Upper semicontinuity for $g$ is completely analogous. 
\end{rem}

The strip  $[f, g] \defeq \set{(x,y) \in \Omega \times \R : f(x) \leq y \leq g(x)}$ satisfies the following properties~:
\begin{gather*}
 \Gamma \subset [f,g]\tag{Inc}\label{eq:inc}\\
 \forall (x,y),(x',y') \in [f,g], \quad (\abs{x-x'}<\delta \Longrightarrow \abs{y-y'}\leq K)\tag{Opt}\label{eq:opt}
\end{gather*}

\begin{defn}A strip $[\phi, \psi]$ where $\phi, \psi :  \Omega \to \R$ is said \textbf{optimal} if it satisfies the optimality property \eqref{eq:opt}, and \textbf{admissible} if it satisfies both the inclusion \eqref{eq:inc} and optimality \eqref{eq:opt} properties.
\end{defn}

The functions $f$ and $g$ have not \textit{a priori} extra regularity properties than simply being lower and upper semicontinuous. The aim is to replace them with more regular ones, defining another admissible strip. Once such a nicer strip is constructed, we will prove that it contains the graph of a transport map, thus completing the proof, because the optimality property guarantees that any transport plan living in the strip has minimal cost.

Since $[f,g]$ is optimal, we have $\abs{f(x)-g(x')} \leq K$ whenever $\abs{x-x'}<\delta$, yielding~
\begin{subequations}\label{eq:fgineq}
 \begin{align}
 f(x) & \geq \inf_{\abs{x-x'}<\delta} g(x') - K,\\
 g(x) & \leq \sup_{\abs{x-x'}<\delta} f(x') + K,
\end{align}
\end{subequations}
motivating the next definition.

\begin{defn}\label{defn:transform}
For every function $\phi : \Omega\to \R$, we define its $\uparrow$ and $\downarrow$ transforms\footnote{These transforms also depend on the domain $\Omega$ which is used to define the $\sup$ and the $\inf$, but we will omit this dependence, which will be implicit throughout the paper, thus avoiding writing as $\phi^{\uparrow,\Omega}$ and similar heavy notations.} as follows~:
\begin{subequations}
\begin{align}
  \phi^\downarrow(x) &= \sup_{y\in \Omega, \abs{y-x}<\delta}  \phi(y) \label{eq:transf1},\\
  \phi^\uparrow(x) &= \inf_{y\in \Omega, \abs{y-x}<\delta} \phi(y). \label{eq:transf2}
\end{align}
\end{subequations}
\end{defn}

By definition of the transforms, the strips $[f,f^\uparrow +K]$ and $[g^\downarrow -K,g]$ are optimal. Moreover, rewriting \eqref{eq:fgineq}, one gets $g^\downarrow - K \leq f \leq g \leq f^\uparrow + K$, meaning that $f^\uparrow$ and $g^\downarrow$ are respectively the largest and smallest functions such that the strips $[f,f^\uparrow +K]$ and $[g^\downarrow -K, g]$ are optimal. In particular, they also keep satisfying the inclusion property. This is stated in the next proposition.

\begin{prop}
For all optimal strip $[\phi, \psi]$ where $\phi, \psi : \Omega\to \R$, $\phi^\uparrow$ and $\phi^\downarrow$ are respectively the largest and smallest functions such that $[\phi,\phi^\uparrow +K]$ and $[\psi^\downarrow -K, \psi]$ are optimal. Therefore, if $[\phi, \psi]$ is admissible, then these strips are also admissible.
\end{prop}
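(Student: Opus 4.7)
I shall handle the $\uparrow$-side in detail; the $\downarrow$-side is completely symmetric, obtained by swapping $\sup$ and $\inf$ everywhere. There are three things to verify: that $[\phi,\phi^\uparrow+K]$ is optimal, that $\phi^\uparrow$ is the pointwise largest function with this property, and that admissibility of $[\phi,\psi]$ transfers to the new strip.

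For optimality of $[\phi,\phi^\uparrow+K]$, fix points $(x,y)$ and $(x',y')$ in this strip with $|x-x'|<\delta$. Since $x'\in\Omega$ is admissible in the infimum defining $\phi^\uparrow(x)$, we have $\phi^\uparrow(x)\leq \phi(x')$. Hence
\[
y - y' \;\leq\; (\phi^\uparrow(x)+K) - \phi(x') \;\leq\; K,
\]
and exchanging $x$ with $x'$ yields $y'-y\leq K$, which is exactly property \eqref{eq:opt}.

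For maximality, let $\chi:\Omega\to\R$ be such that $[\phi,\chi+K]$ is a nonempty optimal strip. Fix $x\in\Omega$; the point $(x,\chi(x)+K)$ lies in the strip, and $(x',\phi(x'))$ does as well for every $x'\in\Omega$. Whenever $|x-x'|<\delta$, the optimality of $[\phi,\chi+K]$ forces $|(\chi(x)+K)-\phi(x')|\leq K$, so in particular $\chi(x)\leq \phi(x')$; taking the infimum over such $x'$ yields $\chi(x)\leq\phi^\uparrow(x)$.

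Finally, suppose $[\phi,\psi]$ is admissible, so that $\phi\leq\psi$ (whence the strip is nonempty) and $\Gamma\subset[\phi,\psi]$. Applying the optimality of $[\phi,\psi]$ to the pair $(x,\psi(x)),(x',\phi(x'))$ with $|x-x'|<\delta$ gives $\psi(x)-\phi(x')\leq K$, and the infimum over such $x'$ yields $\psi(x)\leq \phi^\uparrow(x)+K$. Therefore $[\phi,\psi]\subset[\phi,\phi^\uparrow+K]$, hence $\Gamma\subset[\phi,\phi^\uparrow+K]$, which is \eqref{eq:inc}. The main ``obstacle'' is really only bookkeeping: one must check that the test points actually lie in the strip under consideration, and keep track that the infima are taken over $\Omega$. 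Nothing deeper is needed — the point of the proposition is simply to package the extremality of the $\uparrow,\downarrow$ transforms in a form convenient for the iterated construction of a regular admissible strip carried out in the sequel.
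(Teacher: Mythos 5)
Your proof is correct and follows essentially the same route as the paper, which establishes optimality of $[\phi,\phi^\uparrow+K]$ directly from the definition of the transform and deduces maximality and the preservation of the inclusion property from the inequalities \eqref{eq:fgineq} (note these are stated in the paper with $\sup$ and $\inf$ apparently swapped; your version, with the infimum giving $\psi\leq\phi^\uparrow+K$, is the correct and sharper one actually used). Your extra care about nonemptiness of the fibers when testing maximality is a reasonable bookkeeping point that the paper glosses over, but it does not change the argument.
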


So far, we have shown that the admissible strip $[f,g]$ can be replaced by $[f, f^\uparrow +K]$, which is still admissible. Then it can be replaced by $[f^{\uparrow\downarrow}, f^\uparrow +K]$, since $(f^\uparrow + K)^\downarrow -K = f^{\uparrow\downarrow}$, thus enlarging the strip twice. One could wonder if this construction should go on, and the answer is negative, as a consequence of the following proposition.

\begin{prop}\label{prop:invconj}
For all $\phi : \Omega \to \R$,
\begin{align*}
\phi^{\uparrow\downarrow\uparrow} &= \phi^\uparrow,\\
\phi^{\downarrow\uparrow\downarrow} &= \phi^\downarrow.
\end{align*}
\end{prop}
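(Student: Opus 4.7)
The plan is to recognize that $\uparrow$ and $\downarrow$ form a Galois connection on the set of functions $\Omega\to\R$ ordered pointwise, from which the two identities will follow by a purely formal argument. The key observation is the adjunction
\[
\phi^\downarrow \leq \psi \quad\Longleftrightarrow\quad \phi \leq \psi^\uparrow
\]
for any $\phi,\psi:\Omega\to\R$. Indeed, both sides unfold to the manifestly symmetric condition ``$\phi(y)\leq\psi(x)$ whenever $x,y\in\Omega$ satisfy $|x-y|<\delta$''; the only ingredient is the symmetry of $|x-y|<\delta$ in its two arguments.

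From this adjunction I would deduce the two basic inequalities $\phi\leq\phi^{\downarrow\uparrow}$ and $\phi^{\uparrow\downarrow}\leq\phi$ which hold in any such setup: the first by specializing $\psi=\phi^\downarrow$, the second by taking $\phi^\uparrow$ and $\phi$ as the two arguments, in both cases reducing the claim to a trivial inequality. Informally, $\downarrow\uparrow$ is a closure operator and $\uparrow\downarrow$ is an interior operator on our ordered set.

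To conclude $\phi^{\uparrow\downarrow\uparrow}=\phi^\uparrow$ I would combine these with the monotonicity of the two transforms, which is immediate from their definitions: applying $\uparrow$ to $\phi^{\uparrow\downarrow}\leq\phi$ gives $\phi^{\uparrow\downarrow\uparrow}\leq\phi^\uparrow$, while applying the extensivity inequality $\psi\leq\psi^{\downarrow\uparrow}$ with $\psi=\phi^\uparrow$ gives the reverse $\phi^\uparrow\leq\phi^{\uparrow\downarrow\uparrow}$. The companion identity $\phi^{\downarrow\uparrow\downarrow}=\phi^\downarrow$ then follows by the completely symmetric argument, exchanging the roles of $\uparrow$ and $\downarrow$ throughout.

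I do not expect any genuine obstacle here: this is the classical ``three times gives once'' identity for a Galois connection, and the only real content is the adjunction stated above, which is essentially a tautology once the symmetry of the defining relation is pointed out.
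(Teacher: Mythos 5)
Your proof is correct and is essentially the paper's argument in Galois-connection clothing: the adjunction $\phi^\downarrow\leq\psi\Leftrightarrow\phi\leq\psi^\uparrow$ (which indeed reduces to the symmetry of the relation $|x-y|<\delta$) packages exactly the two facts the paper uses, namely $\phi^{\uparrow\downarrow}\leq\phi$ combined with monotonicity for one inequality, and the extensivity $\phi^\uparrow\leq\phi^{\uparrow\downarrow\uparrow}$ --- which the paper verifies by the same symmetry of the balls $B(x,\delta)$ --- for the other. Nothing is missing; the abstract framing is just a cleaner way of organizing the identical computation.
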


\begin{proof}
Since $\phi^{\uparrow\downarrow}$ is the lowest possible, $\phi^{\uparrow\downarrow} \leq \phi$, hence $\phi^{\uparrow\downarrow\uparrow} \leq \phi^\uparrow$ by monotonicity of the transforms.
As for the converse inequality, $\forall y \in B(x,\delta)$,
$$\phi^{\uparrow\downarrow}(y) \doteq \sup_{z \in B(y,\delta)} \phi^\uparrow(z) \geq \phi^\uparrow(x)\\$$
so that by taking the infimum over $B(x,\delta)$,
$$\phi^{\uparrow\downarrow\uparrow}(x) \geq \phi^\uparrow(x).\qedhere$$
\end{proof}

This shows that the strip $[f^{\uparrow\downarrow},f^\uparrow + K]$, which is admissible, cannot be enlarged without losing the optimality condition. We will see that the $\uparrow$ and $\downarrow$ transforms have regularizing properties which justify the replacement of $[f,g]$ by $[f^{\uparrow\downarrow}, f^\uparrow + K]$.

\subsection{The proof}

We shall now study further properties of the transform operations defined previously and prove a few lemmas which will be useful in the final proof at the end of this section.

\begin{defn}
Let $\phi,\psi : \Omega\to \R$. We say that $(\phi,\psi)$ is a conjugate pair if $\phi^\uparrow = \psi$ et $\psi^\downarrow = \phi$.
\end{defn}

\begin{rem}
By Proposition \ref{prop:invconj}, $(\phi^{\uparrow\downarrow},\phi^\uparrow)$ is a conjugate pair.
\end{rem}

\begin{prop}\label{prop:transfregul}
Let $\phi : \Omega\to \R$ a Borel function. Then, $\phi^\uparrow$ and $\phi^\downarrow$ are regulated functions\footnote{A linear combination of characteristic functions of measurable sets will be called a simple function, and if these sets are intervals we call it a step function. A regulated function is by definition a uniform limit of step functions. This notion coincides, by the way, with that of functions having left and right-sided limits at every point. Note that we work here with functions defined on $\Omega$, where intervals are traces on $\Omega$ of intervals of $\R$. Therefore, regulated functions on $\Omega$ are exactly restrictions on $\Omega$ of regulated functions on $\R$.}, hence continuous outside a countable set. Moreover they are respectively lower and upper semicontinuous.
\end{prop}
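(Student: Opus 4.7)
The proposition breaks into two main claims: one-sided semicontinuity of each transform and the fact that each is regulated; the countable discontinuity set is then a standard corollary of regulatedness. For the semicontinuity, the plan is to exhibit the appropriate strict level sets as open. For any $c \in \R$, $\phi^\downarrow(x) > c$ iff some $y \in \Omega$ with $\phi(y) > c$ satisfies $|y-x| < \delta$, \ie $x \in B(y,\delta)$; hence
$$\{x \in \Omega : \phi^\downarrow(x) > c\} = \bigcup_{y \in \Omega,\, \phi(y) > c} \bigl(B(y,\delta) \cap \Omega\bigr)$$
is open, giving the semicontinuity of $\phi^\downarrow$. A completely parallel argument, writing $\{\phi^\uparrow < c\}$ as a union of open balls centered at points where $\phi$ is small, handles $\phi^\uparrow$. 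The strictness of the inequality $|y-x|<\delta$ in Definition~\ref{defn:transform} is what makes these sets open.

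The nontrivial point is regulatedness, \ie the existence of left and right limits at every point of $\Omega$. I would exploit the moving-window structure in one dimension: fix $x_0 \in \Omega$ and consider $x \downarrow x_0$. For $x$ close to $x_0$, split the window as $(x-\delta,x+\delta) = (x-\delta, x_0+\delta] \cup (x_0+\delta, x+\delta)$, so that
$$\phi^\uparrow(x) = \min\bigl(M(x),\, R(x)\bigr),$$
where $M(x)$ and $R(x)$ denote the infima of $\phi$ over the two pieces intersected with $\Omega$. As $x$ decreases toward $x_0$, the set defining $M$ grows while the one defining $R$ shrinks; thus $M(x)$ is monotone non-decreasing in $x$ and $R(x)$ is monotone non-increasing in $x$. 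Being monotone, both have one-sided limits as $x \downarrow x_0$ (possibly in the extended reals), and so does their minimum. The case $x \uparrow x_0$ and the case of $\phi^\downarrow$ are handled by symmetric decompositions. The classical fact that a regulated function on a subset of $\R$ has at most countably many discontinuities (cf.\ the footnote attached to the statement) then gives the final assertion.

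The main obstacle is really the regulated property: one-sided semicontinuity alone does not imply existence of one-sided limits (witness $\sin(1/x)$ at the origin, continuous but not regulated). What saves the day is the monotone way in which the open window varies as $x$ moves along the line — a genuinely one-dimensional feature — and the analogous decomposition would not produce monotone quantities in higher dimensions, so the argument would not immediately generalize.
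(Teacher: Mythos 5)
Your proof is correct, but it reaches regulatedness by a genuinely different route from the paper. For the semicontinuity you and the paper do essentially the same thing (you via open strict level sets $\{\phi^\downarrow>c\}=\bigcup_{\phi(y)>c}B(y,\delta)\cap\Omega$, the paper via a sequential argument; both hinge on the openness of the ball $B(x,\delta)$), and note that what you prove --- $\phi^\uparrow$ upper and $\phi^\downarrow$ lower semicontinuous --- is the mathematically correct assignment and is exactly what the paper's own proof establishes, even though the word ``respectively'' in the statement reads the other way. For regulatedness, the paper approximates $\phi$ uniformly by simple functions, computes explicitly that the $\uparrow$ transform of a simple function is a step function (via the sets $A_k^\uparrow$, finite unions of intervals of length at least $2\delta$), and uses the fact that the transforms are $1$-Lipschitz for the sup norm to pass to the limit; you instead verify directly the equivalent characterization of regulated functions by one-sided limits, splitting the window $(x-\delta,x+\delta)$ into a monotonically growing piece and a monotonically shrinking piece as $x\downarrow x_0$, so that $\phi^\uparrow$ is the minimum of two monotone functions of $x$. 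Your argument is more self-contained and correctly isolates the one-dimensional mechanism, but the paper's computation is not wasted effort: the explicit step-function structure and the uniform convergence $\phi_n^\uparrow\to\phi^\uparrow$ established inside that proof are precisely what Lemma~\ref{lem:step}, Corollary~\ref{cor:monostep} and Lemma~\ref{lem:monoconj1} feed on later, so if you adopt your route you would still need to prove those facts separately.
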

\begin{proof}
Since $\phi$ is measurable and bounded, it can be expressed as the uniform limit of simple functions $\phi_n$. Now let us see how the $\uparrow$ and $\downarrow$ transforms act on simple functions. Let $\phi$ a simple function
$$\phi = \sum_{i=1}^N \alpha_i \indic_{A_i} \quad \text{where} \quad \alpha_1 < \cdots < \alpha_N.$$
We shall calculate its $\uparrow$ transform $\phi^\uparrow$. Since its value at some point $x$ is defined by the infimum of $\phi$ on $B(x,\delta)$, $\phi^\uparrow$ is equal to $\alpha_1$ at any point which is distant from $A_1$ by less than $\delta$, \ie on the set $\Omega \cap A_1^\uparrow$ where $A_1^\uparrow \defeq A_1 + B(0,\delta)$. It is a disjoint reunion of intervals of length at least $2\delta$, hence the reunion is finite. A similar reasoning tells us that $(\phi^\uparrow)^{-1}(\alpha_2)$ is equal to the trace of $A_2^\uparrow \defeq (A_2 + B(0,\delta))\setminus A_1^\uparrow$ on $\Omega$, and more generally that $\phi^\uparrow$ equals $\alpha_k$ on $\Omega \cap A_k^\uparrow$ where
$$A_k^\uparrow \defeq (A_k + B(0,\delta)) \setminus \left(\bigcup_{i=1}^{k-1} A_i^\uparrow \right)$$
which is a finite and disjoint reunion of intervals. Furthermore, since $\phi$ is supposed to be the limit of $(\phi_n)_n$ and by monotonicity of the $\uparrow$ transform, if $\norm{\phi_n - \phi}_\infty \leq \varepsilon$ then
\begin{equation*}
 \phi - \varepsilon \leq \phi_n \leq \phi +\ve \quad \text{hence} \quad (\phi - \varepsilon)^\uparrow = \phi^\uparrow - \varepsilon \leq \phi_n^\uparrow \leq (\phi+\ve)^\uparrow= \phi^\uparrow + \varepsilon.
\end{equation*}
It proves that $\phi^\uparrow$ is the uniform limit of $\phi_n^\uparrow$, which are step functions~: it is a regulated function. It is a classical issue to see that the set of discontinuity points of any regulated function is at most countable. The same result naturally holds for the $\downarrow$ transform of $\phi$.

Now we shall prove that $\phi^\uparrow$ is upper semicontinuous. Consider a sequence $(x_n)_n$ converging to $x$. Since $\phi^\uparrow = \inf_{B(\,\cdot\,,\delta)} \phi$, if $a\in B(x,\delta)$ then for $n$ large enough $|a-x_n| < \delta$, \ie $a \in B(x_n,\delta)$, and $\phi^\uparrow(x_n) \leq \phi(a)$. This implies that $\varlimsup \phi^\uparrow(x_n) \leq \phi(a)$ for all $a \in B(x,\delta)$, yielding
$$\varlimsup \phi^\uparrow(x_n) \leq \inf_{a\in B(x,\delta)} \phi(a) \doteq \phi^\uparrow(x)$$
which proves the upper semicontinuity of $\phi^\uparrow$. Since $\phi^\downarrow = -(-\phi)^\uparrow$, we get lower semicontinuity for $\phi^\downarrow$.
\end{proof}

In the following, all intervals, possibly given in the form $|a,b|$  where $|$ is either $[$ or $]$, will be intervals of $\Omega$.

\begin{lem}\label{lem:step}
Let $\phi,\psi : \Omega\to \R$ conjugate step functions, $\psi=\phi^\uparrow$ being expressed as
$$\sum_{j=1,\ldots, N} \alpha_j \indic_{I_j}$$
where the $I_j$'s are intervals partitioning $\Omega$ and sorted in increasing order, and the $\alpha_j$ are consecutively distinct. If $(\alpha_{k-1}, \alpha_k, \alpha_{k+1})$ is a triple such that $\alpha_{k-1} > \alpha_k$, $\alpha_{k+1} > \alpha_k$ (we say that $I_k$ is a \textbf{floor}), then the distance between $I_{k+1}$ and $I_{k-1}$, defined as $\inf I_{k+1}-\sup I_{k-1}$ (which could be larger than the measure of $I_k$, or even of its diameter, if $\Omega$ is disconnected) is at least $2\delta$. The symmetric result on \textbf{ceilings} of $\phi$ holds.
\end{lem}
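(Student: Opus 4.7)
The plan is to argue by contradiction. Set $a := \sup I_{k-1}$ and $b := \inf I_{k+1}$, so that the claim becomes $b - a \geq 2\delta$. Assume instead $b - a < 2\delta$, and pick any $x \in I_k$. Since $(\phi, \psi)$ is a conjugate pair, $\psi(x) = \alpha_k$ and simultaneously
\[
\alpha_k = \psi(x) = \phi^\uparrow(x) = \inf_{y \in B(x,\delta)\cap\Omega} \phi(y).
\]
I will derive a contradiction by showing that $\phi(y) \geq \min(\alpha_{k-1}, \alpha_{k+1}) > \alpha_k$ for every $y \in B(x,\delta) \cap \Omega$.

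The key geometric observation is that since $x \in I_k \subset [a,b]$ and $b - a < 2\delta$, any $y \in B(x,\delta)\cap\Omega$ must satisfy either $|y - a| < \delta$ or $|y - b| < \delta$: otherwise $y \geq a + \delta$ and $y \leq b - \delta$ would force $b - a \geq 2\delta$. In the first case, $B(y,\delta)$ meets $I_{k-1}$, because $I_{k-1}$ either contains the point $a$ itself or accumulates at its supremum $a$ from within $\Omega$; from $\phi = \psi^\downarrow$ we then conclude $\phi(y) \geq \alpha_{k-1}$. By the symmetric argument, the second case yields $\phi(y) \geq \alpha_{k+1}$. Either way $\phi(y) > \alpha_k$ uniformly in $y$, contradicting the identity above.

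The symmetric statement about ceilings of $\phi$ follows by applying the floor result to the pair $(-\psi, -\phi)$: an easy check shows $(-\psi)^\uparrow = -\phi$ and $(-\phi)^\downarrow = -\psi$, so this is again a conjugate pair in the sense of the definition, and the ceilings of $\phi$ become the floors of $-\phi$ with the inequalities reversed.

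The only genuinely delicate point in this plan is verifying that $I_{k-1} \cap B(y,\delta)$ (and likewise $I_{k+1} \cap B(y,\delta)$) is indeed nonempty whenever $|y - a| < \delta$ (resp.\ $|y - b| < \delta$), regardless of whether the endpoints $a, b$ belong to $\Omega$ or to the corresponding interval in the partition. This reduces to the elementary observation that an interval of $\Omega$ always has points in every $\delta$-neighborhood of its supremum within $\Omega$, and can be dispatched in a line or two; once granted, the contradiction is immediate from the conjugation identity $\psi = \phi^\uparrow$.
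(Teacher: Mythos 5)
Your proof is correct and follows essentially the same route as the paper's: a contradiction argument that fixes $x\in I_k$, shows every $y\in B(x,\delta)\cap\Omega$ has $B(y,\delta)$ meeting $I_{k-1}$ or $I_{k+1}$, and then uses $\phi=\psi^\downarrow$ and $\psi=\phi^\uparrow$ to force $\alpha_k\geq\min(\alpha_{k-1},\alpha_{k+1})$. The extra care you take with the endpoints $a,b$ possibly lying outside $\Omega$, and the explicit reduction of the ceiling case to the pair $(-\psi,-\phi)$, are both sound and slightly more detailed than the original.
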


\begin{proof}
By contradiction, assume that  the distance between $I_{k+1}$ and $I_{k-1}$ is smaller than $2\delta$. Pick a point $x$ in $I_k$ and $a \in \Omega$ such that $d(a,x) < \delta$. The ball $B(a,\delta)$  contains the point $x\in I_k$ but is wider than the distance between the two intervals $I_{k+1}$ and $I_{k-1}$: hence it intersects either $I_{k-1}$ or $I_{k+1}$ at a certain point $b$. Knowing that $(\phi,\psi)$ is conjugate, it follows that $\phi(a)$, which equals $\psi^\downarrow(a) \doteq \sup_{B(a,\delta)} \psi$, is greater or equal than $\psi(b)$, hence $\phi(a) \geq \min (\alpha_{k-1}, \alpha_{k+1})$. This is true for all $a$ in $B(x,\delta)$, so that $\phi \geq \min (\alpha_{k-1},\alpha_{k+1})$ on $\Omega\cap B(x,\delta)$, and $\psi(x) \geq \min (\alpha_{k-1},\alpha_{k+1}) > \alpha_k$ for $\psi(x) = \phi^\uparrow(x) \doteq \inf_{B(x,\delta)} \phi$. This cannot be true.
\end{proof}
%
%

This result will allow us to control the amount of monotonicity changes of $f$ and $g$ \textit{uniformly}, \ie in terms of $\Omega$ and $\delta$ and independently of the functions $f$ and $g$.

\begin{cor}\label{cor:monostep}
The number of floors $F_1 \leq \ldots \leq F_N$ of $\psi$ is bounded by a constant $M(\Omega,\delta)$, independent from $\phi,\psi$. Moreover, $\psi$ is nondecreasing then decreasing on each interval between floors, namely it increases on $G_1, \ldots, G_{N+1}$ and decreases on $H_1, \ldots, H_{N+1}$ where
$$G_1 \leq H_1 \leq F_1 \leq G_2 \leq H_2 \leq \ldots \leq F_N \leq G_{N+1} \leq H_{N+1}$$
is a subdivision of $\Omega$ in $3N+2$ intervals. A symmetric statement holds for $\phi$.
\end{cor}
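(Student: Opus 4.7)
The plan is to translate the quantitative separation given by Lemma \ref{lem:step} into both a counting bound on the number of floors and a structural description of $\psi$ between them. Throughout, I keep the notation of Lemma \ref{lem:step}, writing $\psi$ as a step function $\sum_{j} \alpha_j \indic_{I_j}$ with consecutively distinct values, so that a floor is an interior interval $I_k$ with $\alpha_{k-1} > \alpha_k < \alpha_{k+1}$.

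For the counting bound, I would label the floors as $F_i = I_{k_i}$ with $k_1 < \cdots < k_N$ and to each associate the open interval $J_i \defeq (\sup I_{k_i - 1}, \inf I_{k_i+1}) \subset \R$. By Lemma \ref{lem:step}, $|J_i| \geq 2\delta$. To see that the $J_i$'s are pairwise disjoint, observe that between two consecutive floors $F_i$ and $F_{i+1}$ there must be at least one intermediate interval (the sequence $\alpha$ strictly increases out of $F_i$ and strictly decreases into $F_{i+1}$), so $k_i + 1 \leq k_{i+1} - 1$; the right endpoint $\inf I_{k_i+1}$ of $J_i$ is then at most the left endpoint $\sup I_{k_{i+1}-1}$ of $J_{i+1}$. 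Since all $J_i$ lie in $[\min \Omega, \max \Omega]$, the total length constraint gives $2N\delta \leq \mathrm{diam}(\Omega)$, yielding the uniform bound $M(\Omega,\delta) \defeq \lfloor \mathrm{diam}(\Omega)/(2\delta) \rfloor$.

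For the monotonicity structure, the key observation is that between two consecutive floors $F_i$ and $F_{i+1}$ the finite sequence $\alpha_{k_i}, \alpha_{k_i+1}, \ldots, \alpha_{k_{i+1}}$ can have no interior local minimum---such a minimum would itself be a floor, contradicting consecutiveness. Combined with the defining inequalities at $F_i$ and $F_{i+1}$ (the sequence strictly increases out of $F_i$ and strictly decreases into $F_{i+1}$), this forces the sequence to be unimodal: strictly increasing up to a unique peak, then strictly decreasing. Grouping the corresponding $I_j$'s defines the intervals $G_{i+1}$ (where $\psi$ increases) and $H_{i+1}$ (where it decreases). A one-sided version of the same argument, applied to the portions of $\Omega$ before $F_1$ and after $F_N$, produces $G_1, H_1$ and $G_{N+1}, H_{N+1}$, possibly degenerating to empty pieces if $\psi$ is monotone near the corresponding boundary. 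A quick count---$N$ floor intervals plus $2(N+1)$ monotone pieces---gives the announced subdivision into $3N + 2$ intervals in the stated order.

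The symmetric statement for $\phi$ follows by applying verbatim the ceiling version of Lemma \ref{lem:step} with the roles of local minima and maxima exchanged. The main technical friction I anticipate is bookkeeping for the degenerate cases: verifying that $J_i$ and $J_{i+1}$ remain disjoint when consecutive floors are separated by only a single intermediate interval (so the common endpoint $\inf I_{k_i+1} = \sup I_{k_{i+1}-1}$ is shared), and handling monotone behavior near $\partial \Omega$ that may leave some extremal $G$ or $H$ pieces empty without disturbing the $3N+2$ count.
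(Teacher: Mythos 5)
Your proof is correct and follows essentially the same route as the paper: the $2\delta$-separation from Lemma \ref{lem:step} packs pairwise disjoint gaps of length at least $2\delta$ into the bounded set $\Omega$, giving $N \leq \lfloor \mathrm{diam}(\Omega)/(2\delta)\rfloor$, while the absence of interior local minima between consecutive floors forces the unimodal (nondecreasing then decreasing) structure that produces the $G_i$ and $H_i$. The paper's own proof is just a terser version of the same two steps, so your extra care with the disjointness of the $J_i$'s and the degenerate boundary cases is added detail rather than a different method.
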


\begin{proof}
We use the same notations as in Lemma \ref{lem:step}~:
\begin{equation*}
 \psi = \sum_{j=1,\ldots, N} \alpha_j \indic_{I_j}
\end{equation*}
where the $I_j$'s are sorted in increasing order and the $\alpha_j$'s are consecutively distinct. We have shown that floors separate their adjacent steps by a distance of at least $2\delta$. Since $\Omega$ is bounded, say it has length $L$, they cannot be more than
$$\left\lfloor \frac{L}{2\delta} \right\rfloor \eqdef M(\Omega,\delta).$$
Now, the union of the intervals between floors (or reaching an endpoint of $\Omega$) can be cut into two parts, $\psi$ being nondecreasing on the left one, then decreasing on the right one, because floors separate higher neighbouring steps by definition. The number of intervals in this subdivision is bounded from above by
$$3\left\lfloor \frac{L}{2\delta} \right\rfloor +2 \eqdef M'(\Omega, \delta).\qedhere$$
\end{proof}

Now, we shall extend this result from conjugate step functions to conjugate Borel functions.

\begin{lem}\label{lem:monoconj1}
Let $\phi,\psi : \Omega\to \R$ conjugate Borel functions. Then there exists a cover of $\Omega$ by intervals
$$G_1 \leq H_1 \leq F_1 \leq G_2 \leq H_2 \leq \ldots \leq F_N \leq G_{N+1} \leq H_{N+1}$$
sorted in increasing order, such that $N \leq M(\Omega, \delta)$, $\psi$ is nondecreasing on each $G_i$, nonincreasing on each $H_i$, and constant on each $F_i$. Moreover, $d(H_j, G_{j+1}) \geq 2\delta$ for all $j$.
\end{lem}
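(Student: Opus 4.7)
The plan is to reduce to the step-function setting of Corollary \ref{cor:monostep} by approximating the conjugate pair $(\phi,\psi)$ uniformly by conjugate pairs of step functions, extracting a convergent subsequence of the resulting subdivisions, and passing to the limit.

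\textbf{Approximation by conjugate step pairs.} Since $\phi$ is bounded and Borel on $\Omega$, there exists a sequence of simple functions $\phi_n$ converging to $\phi$ uniformly. By the argument carried out in the proof of Proposition \ref{prop:transfregul}, both $\phi_n^\uparrow$ and $\phi_n^{\uparrow\downarrow}$ are step functions, and the pair $(\phi_n^{\uparrow\downarrow},\phi_n^\uparrow)$ is conjugate thanks to Proposition \ref{prop:invconj}. Setting $\psi_n \defeq \phi_n^\uparrow$, the monotonicity of $\uparrow$ gives $\psi_n \to \phi^\uparrow = \psi$ uniformly. Corollary \ref{cor:monostep} applied to $(\phi_n^{\uparrow\downarrow},\psi_n)$ produces, for each $n$, a subdivision
$$G_1^n \leq H_1^n \leq F_1^n \leq G_2^n \leq \cdots \leq F_{N_n}^n \leq G_{N_n+1}^n \leq H_{N_n+1}^n$$
of $\Omega$ into at most $3M(\Omega,\delta)+2$ intervals on which $\psi_n$ is respectively nondecreasing, nonincreasing and constant, and satisfying $d(H_j^n,G_{j+1}^n)\geq 2\delta$ for every $j$.

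\textbf{Extraction of a limit subdivision.} Since $N_n\leq M(\Omega,\delta)$, a subsequence gives $N_n=N$ constant. The finitely many endpoints of the subdivision live in the compact set $\Omega$, so a further diagonal extraction makes each of them converge, producing closed limit intervals $G_i,H_i,F_i$ (possibly degenerate to a point) in the same ordered configuration. The inequality $\inf G_{j+1}^n - \sup H_j^n\geq 2\delta$ passes to the limit. Any $x\in\Omega$ belongs to one of the $3N+2$ intervals for every $n$; by pigeonhole it lies in a fixed type for infinitely many $n$, hence in its closed limit, so the $G_i,H_i,F_i$ still cover $\Omega$.

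\textbf{Monotonicity in the limit.} For $x<y$ in the interior of $G_i$, one has $x,y\in G_i^n$ for all $n$ large enough, so $\psi_n(x)\leq\psi_n(y)$; uniform convergence yields $\psi(x)\leq\psi(y)$. Since $\psi$ is regulated (Proposition \ref{prop:transfregul}), it admits one-sided limits at the endpoints of $G_i$, and the inequality extends to the whole closure. The same reasoning shows that $\psi$ is nonincreasing on each $H_i$ and constant on each $F_i$. The main delicate point is the behaviour of $\psi$ at the shared endpoints of adjacent limit intervals and at possibly degenerate intervals; both are handled by the regulatedness of $\psi$ together with the observation that a single point qualifies as a valid (degenerate) interval for the statement.
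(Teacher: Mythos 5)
Your proof is correct and follows essentially the same route as the paper: approximate by conjugate pairs of step functions via Proposition \ref{prop:transfregul} and Proposition \ref{prop:invconj}, apply Corollary \ref{cor:monostep}, extract subsequences so that $N_n$ is constant and the finitely many endpoints converge, and pass the cover, the $2\delta$-separation and the monotonicity to the limit by uniform convergence. Your discussion of the endpoints is in fact slightly more careful than the paper's, which only asserts the monotonicity on the interiors of the limit intervals.
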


\begin{proof}
Let $(\phi_n)_n$ a sequence of simple functions converging uniformly to $\phi$. We have already shown that $\phi_n^\uparrow$ converges uniformly  to $\phi^\uparrow = \psi$ and $\phi_n^{\uparrow\downarrow}$ converges uniformly to $\phi^{\uparrow\downarrow}$. Therefore, up to some renaming, we may assume that $(\phi_n,\psi_n)$ is a conjugate pair of step functions such that $(\phi_n, \psi_n) \to (\phi,\psi)$ uniformly. Then for all $n \in \N$, let us take $(F_k^n)_{k=1,\ldots, N_n}$, $(G_k^n)_{k=1,\ldots, N_n+1}$, $(H_k^n)_{k=1,\ldots, N_n+1}$ as in Corollary \ref{cor:monostep}. Up to extraction, $N_n$ being a sequence of integers bounded by $M \doteq M(\Omega,\delta)$, we may assume that it is constant and equal to some $N$. So far we have
$$G_1^n \leq H_1^n \leq F_1^n \leq G_2^n \leq \ldots \leq F_N^n \leq G_{N+1}^n \leq H_{N+1}^n$$
such that $\psi_n$ is nondecreasing on the $G_k^n$'s, decreasing on the $H_k^n$'s and constant on the $F_k^n$'s. For each $k = 1,\ldots, N$, by further extraction, we ensure that the endpoints of all these intervals converge monotonically when $n \to \infty$, which is possible since $\Omega$ is compact. Let us denote by $(G_i)_i$, $(H_i)_i$ and $(F_i)_i$ the limit intervals
\begin{align*}
G_i &= \lim G_i^n,&
H_i &= \lim H_i^n,&
F_i &= \lim F_i^n,&
\end{align*}
in the sense of point set limits (these are well defined by the monotone convergence of their endpoints). If we ignore the endpoints of each limit interval $F_i,\,G_i$ and $H_i$, it is easy to check that these sets give a partition of $\Omega$ and that $\phi$ and $\psi$ will keep the same monotonicity behavior of $\phi_n$ and $\psi_n$ on the interior of each interval.
\end{proof}

\begin{lem}\label{lem:monoconj2}
 If $\phi,\psi$ are conjugate Borel functions, there exists a finite subdivision of $\Omega$ into intervals such that they are of same monotonicity on each interval of the subdivision.
\end{lem}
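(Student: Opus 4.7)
The plan is to refine the subdivision coming from Lemma \ref{lem:monoconj1} applied to the conjugate pair $(\phi,\psi)$. That lemma yields a partition
\[
G_1 \leq H_1 \leq F_1 \leq G_2 \leq H_2 \leq \ldots \leq G_{N+1} \leq H_{N+1}
\]
on which $\psi$ is nondecreasing on each $G_i$, nonincreasing on each $H_i$, and constant (equal to some $c_i$) on each $F_i$, with the separation $d(H_j,G_{j+1}) \geq 2\delta$. In particular, each floor $F_j$ has length at least $2\delta$. Since $\psi$ is constant on every $F_i$, any monotonicity of $\phi$ on a subinterval of $F_i$ matches $\psi$'s trivially. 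Therefore the conclusion reduces to showing that $\phi$ is nondecreasing on each $G_i$, nonincreasing on each $H_i$, and monotone on boundedly many subintervals of each $F_i$.

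The central claim is that $\phi$ is nondecreasing on each $G_i$ (the statement for $H_i$ then follows by the symmetric argument). First I would use the $2\delta$-separation to localize: for every $x \in G_i$, the ball $(x-\delta,x+\delta) \cap \Omega$ is contained in $F_{i-1} \cup G_i \cup H_i$, since the floor $F_{i-1}$, of length at least $2\delta$, shields $G_i$ from $H_{i-1}$ on the left, and similarly on the right. On this union $\psi$ displays a \emph{peak profile}: constant at the low floor value on $F_{i-1}$, nondecreasing on $G_i$ up to a maximum $p_i$, nonincreasing on $H_i$. As $x$ slides rightwards within $G_i$, the supremum $\phi(x) = \sup_{(x-\delta,x+\delta)\cap\Omega}\psi$ is either attained near the right endpoint of the ball (while the ball sits in the nondecreasing part $F_{i-1}\cup G_i$, hence is itself nondecreasing in $x$) or equals the plateau value $p_i$ (once the ball catches the peak, giving a constant value). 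Either way, $\phi$ is nondecreasing on $G_i$.

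For the floors, a symmetric analysis shows that $\phi$ has a \emph{valley profile} on $F_i$: in the middle region (nonempty since $|F_i| \geq 2\delta$) the ball lies entirely inside $F_i$ and $\phi = c_i$; to the left of the middle the ball reaches into $H_i$ and picks up the nonincreasing tail of $\psi$ there, so $\phi$ is nonincreasing; to the right the ball reaches into $G_{i+1}$ and picks up the nondecreasing head of $\psi$ there, so $\phi$ is nondecreasing. Splitting each $F_i$ accordingly into at most three subintervals, and concatenating with the $G_i$'s and $H_i$'s, produces the desired finite subdivision on each piece of which $\phi$ and $\psi$ share a common direction of monotonicity (constantness counting as compatible with either). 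The main obstacle I anticipate is the careful bookkeeping in the central claim: one must exploit the $2\delta$-separation precisely in order to forbid the supremum defining $\phi(x)$ from drawing on distant regions where $\psi$ could again exceed $p_i$, and handle degenerate situations (a $G_i$ or $H_i$ reducing to a point, or $x$ close to the boundary of $\Omega$) using the semicontinuity afforded by Proposition \ref{prop:transfregul}.
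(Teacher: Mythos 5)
Your argument is correct and follows essentially the same route as the paper: both exploit the $2\delta$-separation across the floors to confine the $\delta$-window defining $\phi=\psi^\downarrow$ to a single region where $\psi$ has a monotone or unimodal profile, and both then refine the subdivision inside the floors (the paper splits each $F_i$ at the midpoint $m_i$ of the gap and merges the two halves with the adjacent $H_i$ and $G_{i+1}$, whereas you split each floor into three pieces --- a purely organizational difference). One small inaccuracy worth fixing in the write-up: for $x\in G_i$ the ball $(x-\delta,x+\delta)$ may also reach into the \emph{next} floor $F_i$ when $H_i$ is short (and likewise the ``plateau value'' need not be $\sup_{G_i}\psi$ if $\psi$ jumps upward at the junction with $H_i$), but neither point harms the monotonicity of the sliding supremum, since the floor value is dominated by values of $\psi$ already present in the window and the supremum over the growing intersection with $H_i$ is itself nondecreasing in $x$.
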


\begin{proof}
Take the subdivision
$$G_1 \leq H_1 \leq F_1 \leq G_2 \leq \ldots \leq F_N \leq G_{N+1} \leq H_{N+1}$$
given by Lemma \ref{lem:monoconj1}, where each interval is a monotonicity or constancy interval for $\psi$. Each floor interval $F_i$ may be divided into two parts $F_i^-$ (the first half of the interval) and $F_i^+$ (the second). More precisely, we define $F_i^-=F_i\cap ]-\infty,m_i]$ and $F_i^+=[m_i,+\infty[$, where $m_i:=(\inf G_{i+1}+\sup H_i)/2$ is the middle point between the adjacent endpoints of the two intervals next to $F_i$. In this way both $F_i^-$ and  $F_i^+$ are at least $\delta$ long (in the sense that $m_i-\sup H_i$ and $\inf G_{i+1}-m_i$ are at least $\delta$) and moreover $\psi$ is nondecreasing on each interval of the form $F_i^+\cup G_{i+1}$ (and on the first interval $G_1$) and nondecreasing on each $H_i\cup F_i^-$ (and on $H_{N+1}$).

It is not difficult to check that $\phi$ has the same monotonicity of $\psi$ on these intervals. Let us consider for instance the case of $F_i^+\cup G_{i+1}$, where $\psi$ is nondecreasing. Let us denote by $a$ and $b$ its endpoints, \ie $a=\inf F_i^+$, $b=\sup G_{i+1}$ and $F_i^+\cup G_{i+1}=[a,b]\cap \Omega$. Consider that $\psi$ is also nondecreasing (actually, constant) on $[a-\delta,a]$, since this segment is included in $F_i^-$: this implies that $\phi=\psi^\downarrow$ is nondecreasing as well on the interval $[a,b-\delta]$ (as a consequence of the fact that the behavior of $\phi$ on an interval only depends on the behavior of $\psi$ on the same interval enlarged by $\delta$). We are only left to prove that $\phi$ is also nondecreasing on $]b-\delta,b]$ but this is easy to check since $\phi$ is actually constant on this segment. Indeed, the value $\psi(b)$ is the maximum of $\psi$ on $[b-\delta,b+\delta]$, which implies that $\phi$ is constant on $]b-\delta,b+\delta[$.

An analogous proof works for the intervals $H_i\cup F_i^-$ and for $G_1$ and $H_{N+1}$.
\end{proof}

This last result will be the key point in the proof of the existence of an optimal map, since the following lemma allows for building transport maps which are included in a given strip, provided the boundaries of the strip are given by functions with the same monotonicity.

\begin{lem}\label{lem:monotransp}
Consider two probabilities $\mu\in\pical(\Omega)$ and $\nu\in\pical(\Omega')$ and  two Borel functions $\phi,\psi : \Omega\to \Omega$ with the same monotonicity and such that $\phi \leq \psi$. If there exist $\gamma \in \Pi(\mu,\nu), T \in \Tau(\mu,\nu)$ such that $\Supp \gamma \subset [\phi,\psi]$ and such that $T$ has the same monotonicity as $\phi$ and $\psi$, then the graph of $T$ is $\mu$-almost everywhere included in $[\phi,\psi]$.
\end{lem}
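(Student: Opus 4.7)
The guiding idea is that on the real line the nondecreasing transport from $\mu$ to $\nu$ is essentially unique (since $\mu$ is atomless in the setting of Theorem~\ref{thm:existmap} to which this lemma is applied), so it suffices to exhibit \emph{one} nondecreasing transport whose graph lies in the strip $[\phi,\psi]$ and then invoke this uniqueness. Without loss of generality I assume that $\phi,\psi,T$ are all nondecreasing; the nonincreasing case is symmetric after reflecting $\Omega'$.

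First, I would extract from the support condition $\Supp\gamma\subset[\phi,\psi]$ two quantile inequalities between $F_\mu$ and $F_\nu$: for every $t\in\Omega$,
$$F_\nu(\phi(t)^-)\leq F_\mu(t^-)\qquad\text{and}\qquad F_\nu(\psi(t))\geq F_\mu(t).$$
Indeed, if $(x,y)\in\Supp\gamma$ satisfies $x\geq t$ then $y\geq\phi(x)\geq\phi(t)$ by monotonicity of $\phi$, so $\gamma(\set{x\geq t,\,y<\phi(t)})=0$; bounding $\gamma(\set{y<\phi(t)})$ above by $\gamma(\set{x<t})=\mu((-\infty,t))$ yields the first inequality, which after projecting onto the second marginal reads $\nu((-\infty,\phi(t)))\leq\mu((-\infty,t))$. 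The second inequality is obtained symmetrically from $\set{x\leq t,\,y>\psi(t)}$ using the monotonicity of $\psi$.

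Second, I would consider the explicit quantile map $\tilde T(x)\defeq F_\nu^{-1}(F_\mu(x))$, where $F_\nu^{-1}(u)\defeq\inf\set{s\in\R:F_\nu(s)\geq u}$. This map is nondecreasing and pushes $\mu$ onto $\nu$. The inequality $F_\nu(\psi(x))\geq F_\mu(x)$ immediately gives $\tilde T(x)\leq\psi(x)$, and the inequality $F_\nu(\phi(x)^-)\leq F_\mu(x^-)=F_\mu(x)$ (using that $\mu$ is atomless) gives $\tilde T(x)\geq\phi(x)$ at every $x$ at which $F_\mu$ is strictly increasing from the left; the remaining points form a countable set, hence $\mu$-negligible. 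So $\tilde T(x)\in[\phi(x),\psi(x)]$ for $\mu$-a.e.~$x$. Because $\mu$ is atomless, any nondecreasing transport from $\mu$ to $\nu$ coincides $\mu$-a.e. with $\tilde T$ (since $\set{T\leq s}$ must be an initial segment of $\mu$-measure $F_\nu(s)$), hence $T(x)\in[\phi(x),\psi(x)]$ for $\mu$-a.e.~$x$.

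The main obstacle I anticipate is the direction $T\geq\phi$: the estimate coming from $\gamma$ is only non-strict, so some care is needed to push it past the generalised inverse, which forces one to identify (and discard) the $\mu$-negligible set of left-isolated points of $\Supp\mu$ where $F_\mu$ fails to be strictly increasing from the left. The opposite direction $T\leq\psi$ falls out directly from the quantile characterisation.
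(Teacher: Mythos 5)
Your overall architecture --- reduce to the nondecreasing case, extract the two quantile inequalities $F_\nu(\phi(t)^-)\leq F_\mu(t^-)$ and $F_\nu(\psi(t))\geq F_\mu(t)$ from $\Supp\gamma\subset[\phi,\psi]$, locate the quantile map $\tilde T=F_\nu^{-1}\circ F_\mu$ inside the strip, and conclude by essential uniqueness of the nondecreasing transport --- is a viable route and genuinely different from the paper's. The paper never introduces distribution functions: for each level $a$ it equates the $\gamma$-masses of $\{(x,y):T(x)\leq a<y\}$ and $\{(x,y):y\leq a<T(x)\}$ using only $T_\#\mu=\nu$, and kills the second set by monotonicity of $T$ and $\phi$; in particular it needs no atomlessness of $\mu$, a hypothesis which is absent from the statement of the lemma but which your argument uses essentially (harmless for the application in Theorem \ref{thm:existmap}, where each $\mu_j$ is atomless, but worth flagging as a restriction of what you actually prove).

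There is, however, a genuine flaw in your treatment of the lower bound $\tilde T\geq\phi$. The implication ``$F_\nu(\phi(x)^-)\leq F_\mu(x^-)=F_\mu(x)$ gives $\tilde T(x)\geq\phi(x)$ at every $x$ at which $F_\mu$ is strictly increasing from the left'' is false as stated: the obstruction to upgrading the weak inequality $F_\nu(s)\leq F_\mu(x)$ (valid for all $s<\phi(x)$) into $\tilde T(x)\geq\phi(x)$ is a flat stretch of $F_\nu$ at the level $F_\mu(x)$, not any failure of left-increasingness of $F_\mu$. For instance, if $F_\mu$ is strictly increasing at $x$ with $F_\mu(x)=\tfrac12$, while $F_\nu\equiv\tfrac12$ on $[\tfrac12,\tfrac32)$ and $\phi(x)=1$, then both of your inequalities hold at $x$ and yet $\tilde T(x)=\tfrac12<1=\phi(x)$. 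So the exceptional set you discard (left-isolated points of $\Supp\mu$) is not the right one. The a.e.\ statement is still true, but for a different reason: $\tilde T(x)<\phi(x)$ forces $F_\nu$ to be constant, equal to $F_\mu(x)$, on some nondegenerate interval $[s,\phi(x))$; $F_\nu$ has only countably many such constancy levels (one per gap of $\Supp\nu$), and for each such level $u$ the set $\{x:F_\mu(x)=u\}$ is an interval of $\mu$-measure zero (here atomlessness is used again), so the union over all such levels is $\mu$-negligible. With this repair the proposal goes through; note also that the paper's direct argument avoids the issue altogether by never inverting $F_\nu$.
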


\begin{proof}
Let $a$ be such that for some $x_0$ we have $T(x_0) < a < \phi(x_0)$. One has $ \mu(x : T(x) \leq a)= \nu(y : y \leq a)$ and hence $ \gamma((x,y) : T(x) \leq a) = \gamma((x,y) : y \leq a)$. By subtracting the same quantity $\gamma((x,y): T(x)\leq a,\,y\leq a)$ to these two measures we get
$$\gamma((x,y) : T(x) \leq a < y) = \gamma((x,y) : y \leq a < T(x)).$$
But for $(x,y)$ to be in the right-hand side set, since $T(x_0) < a$, $T$ is nondecreasing and $a < T(x)$, one must have $x > x_0$. Hence $\phi(x) \geq \phi(x_0) > a$ since $\phi$ is also nondecreasing. Therefore
$$\gamma\left((x,y) : y \leq a < T(x)\right) \leq \gamma\left((x,y) : y \leq a < \phi(x)\right)$$
which is null for $\gamma$ is concentrated on $\Gamma \subset [\phi,\psi]$. As a result $\gamma((x,y) : T(x) \leq a < y) = 0$ and $\gamma((x,y) : T(x) \leq a < \phi(x) \leq y)=0$ as well by inclusion. Notice that the condition $\phi(x)\leq y$ is useless here, since $\gamma$ is concentrated on $[\phi,\psi]$, and this also gives
\begin{multline*}
\mu(x : T(x) \leq a < \phi(x))=\gamma((x,y) : T(x) \leq a < \phi(x))\\=\gamma((x,y) : T(x) \leq a < \phi(x) \leq y)=0.
\end{multline*}
This means that for all $a$ such that $T(x_0) < a < \phi(x_0)$ for some $x_0$,
$$\mu(x : T(x) \leq a < \phi(x)) = 0.$$
Taking a countable dense set of such $a$'s, this yields
$$\mu(x : T(x) < \phi(x)) = 0.$$
An analogous proof provides $T\leq \psi$ $\mu-$a.e.
\end{proof}

We are now ready to prove the main theorem of this paper.

\begin{proof}[Proof of Theorem \ref{thm:existmap}]

We replace $f$ and $g$ by $f^{\uparrow\downarrow}$ and $g^\uparrow +K$. Lemma \ref{lem:monoconj2} shows that one can find a finite subdivision of $\Omega$ into intervals $\Omega_1, \ldots, \Omega_N$ on which $f$ and $g$ are of same monotonicity. Let us set $\gamma^\star_j = \gamma^\star_{\vert \Omega_j \times \Omega}$, $\mu_j = \pi_1(\gamma^\star_j)$, $\nu_j = \pi_2(\gamma^\star_j)$ and $f_j = f_{\vert \Omega_j}$, $g_j = g_{\vert \Omega_j}$ so that $\gamma^\star_j \in \Pi(\mu_j,\nu_j)$ is supported in $[f_j,g_j]$. Since the $\mu_j$'s have no atom, it is a classical result that there exists a unique transport map $T_j \in \Tau(\mu_j,\nu_j)$ with the same monotonicity of $f$ and $g$. Since $\Supp \gamma^\star_j \subset [f_j,g_j]$, Lemma \ref{lem:monotransp} guarantees that $T_j$ has its graph $\mu$-almost everywhere included in $[f_j,g_j]$. We shall naturally glue these $T_j$'s together, posing $T(x) = T_j(x)$ on $\Omega_j$. It is clear that $T \in \Tau(\mu,\nu)$ and that $\Supp \gamma_T \subset [f,g]$ which implies by optimality of $[f,g]$ that $\omega_{\delta}(T) = K$. It is an optimal transport map for the cost $\omega_\delta$. 
\end{proof}
\section{A counter-example}

We finish this analysis with an easy counter-example, showing that the optimal transport map $T$ is not always monotone (which would trivialize the interest of the previous existence results). This example is essentially due to J. Louet (\cite{JeanPHD}), who found it for another variational problem. Yet, it can be easily adapted to our scopes.

Consider the map $U:[0,1]\to[0,1]$ given by
$$U(x)=\begin{cases}2x&\mbox{ if }x\leq \frac12,\\
				2-2x&\mbox{ if }x\geq \frac12.\end{cases}$$
Consider $\mu=f(x)dx$ a probability measure on $[0,1]$ given by the density $f$:
$$f(x)=\begin{cases}\frac 85&\mbox{ if }x\in\left[0,\frac 14\right]\cup\left[\frac 34,1\right],\\
				\frac 25&\mbox{ if }x\in\left[\frac 14,\frac 34\right].\end{cases}$$
Take $\nu=U_\#\mu$. It is not difficult to check that $\nu$ is supported on $[0,1]$, and is absolutely continuous with density $g$				
$$g(x)=\begin{cases}\frac 85&\mbox{ if }x\in\left[0,\frac 12\right],\\
				\frac 25&\mbox{ if }x\in\left[\frac 12,1\right].\end{cases}$$				
Also, one can compute the unique monotone increasing map $T$ such that $T_\#\mu=\nu$. Its expression is
$$T(x)=\begin{cases}x&\mbox{ if }x\in\left[0,\frac 14\right],\\
				\frac 14 + \frac 14(x-\frac14)&\mbox{ if }x\in\left[\frac 14,\frac 34\right],\\
				 x-\frac 38&\mbox{ if }x\in\left[\frac 34,\frac 78\right],\\
				\frac 12 + 4(x-\frac 78)&\mbox{ if }x\in\left[\frac 78,1\right].\end{cases}$$
The unique monotone decreasing map is simply symmetric to $T$, due to the symmetry of the starting measure $\mu$.

Consider now $\delta\leq \frac 18$. It is easy to check that we have $\omega_\delta(T)=4\delta$ (a slope of $4$ is realized in the last interval $\left[\frac 78,1\right]$, whose length is $\frac 18$), while 
$\omega_\delta(U)=2\delta$ (since $U$ has always slope $2$).

This proves that, for these given choices of $\mu,\nu$ and $\delta$, the optimal map cannot be $T$ (it does not prove on the contrary that the optimal map is $U$).
\bigskip

\begin{minipage}{5.5cm}
\begin{tikzpicture}[xmin=-1,xmax=4.5,ymin=-1,ymax=4.5]
 \axes \fenetre

\draw[red, domaine={0}{2}, samples=100] plot (\x,{2*\x});
\draw[red, domaine={2}{4}, samples=100] plot (\x,{8-2*\x});
\draw (1,0) node{$\bullet$} node[above]{$\frac 14$};
\draw (3,0) node{$\bullet$} node[above]{$\frac 34$};;
\draw (0,2) node{$\bullet$} node[right]{$\frac 12$};;
\draw (0.5,-0.5) node{$f=\frac 85$};
\draw (3.5,-0.5) node{$f=\frac 85$};
\draw (2,-0.5) node{$f=\frac 25$};
\draw (-0.5,1) node{$g=\frac 85$};
\draw (-0.5,3) node{$g=\frac 25$};
\draw[dotted] (1,0)--(1,4);
\draw[dotted] (3,0)--(3,4);
\draw[dotted] (0,2)--(4,2);
\draw (3.5,3.5) node{$U(x)$};
\end{tikzpicture}
\end{minipage}
\hspace{1cm}
\begin{minipage}{5.5cm}
\begin{tikzpicture}[xmin=-1,xmax=4.5,ymin=-1,ymax=4.5]
 \axes \fenetre

\draw[red, domaine={0}{1}, samples=100] plot (\x,{\x});
\draw[red, domaine={1}{3}, samples=100] plot (\x,{\x/4+3/4});
\draw[red, domaine={3}{3.5}, samples=100] plot (\x,{\x-3/2});
\draw[red, domaine={3.5}{4}, samples=100] plot (\x,{4*\x-12});

\draw (1,0) node{$\bullet$} node[above]{$\frac 14$};
\draw (3,0) node{$\bullet$} node[above]{$\frac 34$};;
\draw (0,2) node{$\bullet$} node[right]{$\frac 12$};;
\draw (0.5,-0.5) node{$f=\frac 85$};
\draw (3.5,-0.5) node{$f=\frac 85$};
\draw (2,-0.5) node{$f=\frac 25$};
\draw (-0.5,1) node{$g=\frac 85$};
\draw (-0.5,3) node{$g=\frac 25$};
\draw[dotted] (1,0)--(1,4);
\draw[dotted] (3,0)--(3,4);
\draw[dotted] (0,2)--(4,2);
\draw (2.5,3.5) node{$T(x)$};
\end{tikzpicture}
\end{minipage}

\section{Technical extensions}

For the sake of simplicity, we tried to describe our problem sticking to the easiest case. For instance, we assumed both $\Supp(\mu)$ and $\Supp(\nu)$ to be compact, which simplified some proofs.

Also, we used the same set $\Omega$ both as the support of $\mu$ and as the domain where the functions are defined and the oscillation $\omega_\delta$ is computed. Indeed, if we take a measure which is not fully supported on a set $\Omega$, we could face two reasonable choices for the functional $\omega_\delta$, since one could take the supremum over pairs of point $x,x'\in \Supp(\mu)$ with $|x-x'|<\delta$, or more generally over $x,x'\in \Omega$ with $|x-x'|<\delta$. We chose the first definition, which is easier to handle and corresponds more to the application we had in mind.
 
Yet, it is true that the behavior of $T$ on $\Omega\setminus\Supp(\mu)$ does not affect the image measure constraint, but it could affect the value of $\omega_\delta$ if the second definition is chosen, thus penalizing big jumps of $T$ between different connected components of $\Supp(\mu)$. The problem is that this second definition makes it more difficult to define a Kantorovich approach and we need to slightly change our functional $\omega^K_\delta$.

\subsection{Optimal plans and maps in the non-compact case}

Few adaptations have to be performed for the existence of an optimal plan if $\mu,\nu$ are not compactly supported. 

First, let us notice that in this case the existence of a plan $\gamma\in\Pi(\mu,\nu)$ such that $\omega^K_\delta(\gamma)<+\infty$ is not straightforward (unless $\Supp(\nu)$ is compact) and has to be supposed. This fact more or less corresponds to the fact that $\nu$ has a queue which is comparable to (or smaller) than that of $\mu$, \ie that there exists a constant $k$ such that $\nu(B_{kr}^c)\leq \mu(B_r^c)$ for large $r$. Anyway, let us assume that $\inf (K_\delta)<+\infty$.

It is standard and well-known in optimal transport (see for instance \cite{villani}) that for given $\mu,\nu$ the set $\Pi(\mu,\nu)$ is a tight subset of $\pical(\Omega\times\Omega)$. This still allows for the extraction of a weakly converging subsequence $\gamma_n\deb\gamma$.

Obviously we cannot take a subsequence such that $\Gamma_n$ Hausdorff converges to $\Gamma$ since this would require the domain to be compact. Yet, we can easily, by a diagonal argument, extract a subsequence such that for every natural integer $R$ we have $\Gamma_n\cap(\overline{B_R}\times\overline{B_R})\to \Gamma_R\subset \overline{B_R}\times\overline{B_R}$.

Since we can write $\omega^K_\delta=\sup_R\omega^K_{\delta,R}$, where 
$$\omega^K_{\delta,R}(\gamma):=
 \sup \left\{\abs{y-y'} : (x,y), (x',y') \in \Supp(\gamma)\cap\left(\overline{B_R}\times\overline{B_R}\right), \abs{x-x'}<\delta\right\},$$
we can infer the semicontinuity of $\omega^K_\delta$ from that of each $\omega^K_{\delta,R}$, which can be proven using $\Supp(\gamma) \cap(\overline{B_R}\times\overline{B_R})\subset \Gamma_R$ and applying the same arguments as above.

Once the existence of an optimal plan is established, one needs to adapt the content of Section 3 to the case where $\Omega$ is non-compact. This is not difficult once we notice that the support $\Gamma$ of an optimal plan $\gamma$ must be ``locally bounded'' in the following sense.

If we suppose that the minimum of $\omega^K_\delta$ is finite, \ie $K<+\infty$, then every vertical fiber $ \set{y : (x,y) \in \Gamma}$ has diameter bounded above by $K$ (since if $(x,y)$ and $(x,y')$ belong to $\Gamma$, then we should have $|y-y'|\leq K$ due to $|x-x|=0<\delta$). This proves that $f$ and $g$ are well defined. Not only, for every $x$ $f$ is bounded above by $f(x)+K$ on the whole ball $B(x,\delta)$, which allows, by recursively applying this bound, to say that $f$ is locally bounded. Analogous considerations hold for $g$. In particular, the intersection of the support of $\Gamma$ with vertical strips of the form $\overline{B_R}\times\R$ are bounded and hence compact.

The only extra point to remark in order to perform the same analysis on $\R$ is that we will have no more a finite number of intervals: when we approximate (Proposition \ref{prop:transfregul}) $\phi$ and $\psi$ with step functions this will be done with functions which are constant on a countable (but locally finite) number of intervals; the sum in Lemma \ref{lem:step} will be no longer finite but locally finite, and the bounds on the number of intervals appearing in Corollary \ref{cor:monostep} and on will only be local. Yet, the main points of the proof will stay the same, since they are essentially local.

\subsection{Wider definition of $\omega_\delta$}

If one considers a measure $\mu$ which is not fully supported on $\Omega$ but wants to define $\omega_\delta$ in the following way 
$$\omega_\delta(T):= \sup_{x,x'\in\Omega,\,|x-x'|<\delta}|T(x)-T(x')|,$$
then the definition of the functional $\omega^K_\delta$ has to be changed, since that of Section 2.1 only considers pairs $(x,y)$ and $(x',y')$ in the support of $\gamma$, so that $x,x'\in\Supp(\mu)$. 

A possible way to overcome the problem is the following: define
$$\omega_\delta^K(\gamma):=\inf\left\{ \sup\{|y-y'|\;:\;(x,y),(x',y')\in \Gamma,\, |x-x'|<\delta\}\;:\;\Gamma\in\mathcal{A}(\gamma)\right\},$$
where
$$\mathcal{A}(\gamma):=\{\Gamma\subset\Omega\times\R\;:\;
\Gamma\supset\Supp(\gamma),\,\pi_1(\Gamma)=\Omega\}.$$

This means that, instead of computing a maximal oscillation on the support of $\gamma$, we compute it on sets which are extensions of this support, but have full projection onto $\Omega$, and we chose the best possible extension. Again, this functional only depends on $\Supp(\gamma)$, as in Section 2.1.

The existence of an optimal $\gamma$ both in the compact or non-compact case easily follows from the same considerations, simply replacing the support with this set $\Gamma$ (in a minimizing sequence, take a sequence of sets $\Gamma_n$, make it converge to a set $\Gamma$, which will contain the support of the limit measure\dots).

Moreover, for any fixed measure $\gamma$ it is straightforward that an optimal set $\Gamma$ does exist (same argument), and we will use this set to define the functions $f$ and $g$ of Section 3.1. The rest of the construction is exactly the same.

\bigskip
\bigskip

{\small

\begin{minipage}{6cm}
Didier Lesesvre, Paul Pegon\\
\'Ecole Normale Supérieure de Cachan,\\
 61, Avenue du Président Wilson,\\
 94235 Cachan cedex, FRANCE,\\
  {\tt didier.lesesvre@ens-cachan.fr\\paul.pegon@ens-cachan.fr}
  \end{minipage}
\begin{minipage}{7cm}
Filippo Santambrogio,\\
Laboratoire de Mathématiques d'Orsay,\\
 Université Paris-Sud,\\
 91405 Orsay cedex, FRANCE,\\
  {\tt filippo.santambrogio@math.u-psud.fr}\\
  \phantom{a}
  \end{minipage}
  
  }
\end{document}